\newcommand{\cO}{\mathcal{O}}
\newcommand{\QQ}{\mathbb{Q}}
\newcommand{\RR}{\mathbb{R}}
\newcommand{\ZZ}{\mathbb{Z}}
\newcommand{\rig}{\mathrm{rig}}
\DeclareMathOperator{\Rig}{Rig}
\DeclareMathOperator{\GL}{GL}
\DeclareMathOperator{\Spf}{Spf}
\DeclareMathOperator{\Spec}{Spec}
\newcommand{\into}{\hookrightarrow}
\newcommand{\onto}{\twoheadrightarrow}
\newtheorem{proposition}{Proposition}[section]
\newtheorem{theorem}[proposition]{Theorem}
\newtheorem*{assumption}{Assumption}
\theoremstyle{definition}
\newtheorem{definition}[proposition]{Definition}
\theoremstyle{remark}
\newtheorem{remark}[proposition]{Remark}
\begin{document}

\title{Density of classical points in eigenvarieties}
\author{David Loeffler}
\address{Warwick Mathematics Institute \\ Zeeman Building \\ University of Warwick \\ Coventry CV4 7AL, UK}
\email{d.a.loeffler@warwick.ac.uk}
\thanks{Supported by EPSRC postdoctoral fellowship EP/F04304X/2}

\begin{abstract}
In this short note, we study the geometry of the eigenvariety parametrising $p$-adic automorphic forms for $\GL_1$ over a number field, as constructed by Buzzard. We show that if $K$ is not totally real and contains no CM subfield, points in this space arising from classical automorphic forms (i.e.~algebraic Gr\"ossencharacters of $K$) are not Zariski-dense in the eigenvariety (as a rigid space); but the eigenvariety posesses a natural formal scheme model, and the set of classical points \emph{is} Zariski-dense in the formal scheme.

We also sketch the theory for $\GL_2$ over an imaginary quadratic field, following Calegari and Mazur, emphasising the strong formal similarity with the case of $\GL_1$ over a general number field.
\end{abstract}

\maketitle

\section{Zariski-density in formal and rigid spaces}

Let $A$ be a finite algebra over the formal power series ring $\ZZ_p \llbracket T_1, \dots, T_n \rrbracket$ for some $n \ge 0$, which is flat over $\ZZ_p$ (i.e.~$p$ is not a zero-divisor in $A$). Then we have a choice of geometric objects attached to $A$: the affine scheme $\Spec(A)$, and its generic fibre $\Spec(A[\frac{1}{p}])$; the affine formal scheme $\Spf(A)$; and the rigid-analytic space $(\Spf(A))^\rig$ obtained by applying Berthelot's generic fibre construction \cite[\S 7]{dejong-formalrigid}. We abbreviate the latter by $\Rig(A)$.

\begin{proposition}
 There following three sets are in canonical bijection with each other:
 \begin{itemize}
  \item Points of $\Spec(A[\frac{1}{p}])$, i.e.~maximal ideals of $A[\frac{1}{p}]$;
  \item Morphisms of formal schemes $\Spf(\cO) \to \Spf(A)$, with $\cO$ the ring of integers of a finite extension of $\QQ_p$ (``rig-points'' of $\Spf(A)$);
  \item Points of $\Rig(A)$.
 \end{itemize}
\end{proposition}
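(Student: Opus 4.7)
The plan is to separate the equivalence (2)$\Leftrightarrow$(3), which will fall out of the universal property of Berthelot's construction, from the equivalence (1)$\Leftrightarrow$(2), which is where the real content lies. Under the hypotheses on $A$, the formal scheme $\Spf(A)$ is admissible in Berthelot's sense (topologically of finite presentation over $\ZZ_p$, and flat), and an $L$-valued point of $\Rig(A)$ for a finite extension $L/\QQ_p$ is then essentially by definition the same datum as a morphism of formal $\ZZ_p$-schemes $\Spf(\cO_L) \to \Spf(A)$, via \cite[\S 7]{dejong-formalrigid}; the closed points of $\Rig(A)$ are the equivalence classes of such rig-points with varying coefficient ring, giving (2)$\Leftrightarrow$(3).

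For (2)$\to$(1), given a continuous ring map $\varphi \colon A \to \cO$ I would invert $p$ to obtain a $\QQ_p$-algebra homomorphism $A[\tfrac{1}{p}] \to L := \cO[\tfrac{1}{p}]$; its image is a $\QQ_p$-subalgebra of a field, finite-dimensional over $\QQ_p$, hence itself a field, so the kernel is a maximal ideal. For the reverse direction (1)$\to$(2), given a maximal ideal $\mathfrak{m} \subset A[\tfrac{1}{p}]$ set $L = A[\tfrac{1}{p}]/\mathfrak{m}$; I want a continuous lift $A \to \cO_L$. The main obstacle, and the only non-formal step, is establishing that $L$ is finite over $\QQ_p$ to begin with. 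Since $A[\tfrac{1}{p}]$ is finite over $R[\tfrac{1}{p}]$ with $R = \ZZ_p\llbracket T_1,\dots,T_n\rrbracket$, and the quotient by any maximal ideal is a field integral over the image of $R[\tfrac{1}{p}]$, this reduces to the corresponding statement for $R[\tfrac{1}{p}]$; that statement follows from Weierstrass preparation, in that any non-zero element of $\mathfrak{m} \cap R$ (after factoring out a power of $p$ and a unit of $R$) is a distinguished polynomial in one of the variables, and $R[\tfrac{1}{p}]$ modulo such a polynomial is visibly a finite $\QQ_p$-algebra after an induction on $n$.

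Once the finiteness of $L/\QQ_p$ is in hand, Weierstrass preparation also forces the images $t_i$ of the $T_i$ in $L$ to satisfy $|t_i| < 1$ (roots of distinguished polynomials lie in the open unit disc), so that $R$ maps into $\cO_L$; integrality of $A$ over $R$ combined with $\cO_L$ being integrally closed forces the image of $A$ to lie in $\cO_L$ as well, and the resulting map is automatically continuous for the $(p,T_1,\dots,T_n)$-adic topology on $A$ and the $p$-adic topology on $\cO_L$. That the two constructions are mutually inverse is then a routine verification, using the uniqueness of the extension of any continuous $A \to \cO \subset L$ to a $\QQ_p$-algebra map $A[\tfrac{1}{p}] \to L$.
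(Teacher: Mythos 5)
Your argument is correct in substance, but it is worth noting that the paper does not actually prove this proposition: it simply cites \cite[7.1.9, 7.1.10]{dejong-formalrigid}, where exactly this dictionary between maximal ideals of $A[\frac{1}{p}]$, rig-points of $\Spf(A)$, and points of the Berthelot generic fibre is established. What you have written is essentially a reconstruction of de Jong's proof from scratch, and the decomposition you choose --- (2)$\Leftrightarrow$(3) being definitional, (1)$\Leftrightarrow$(2) carrying the content, with the finiteness of $A[\frac{1}{p}]/\mathfrak{m}$ over $\QQ_p$ as the one non-formal input --- is the right one. Two small points deserve attention. First, your invocation of Weierstrass preparation is slightly too casual: a general nonzero element of $\ZZ_p\llbracket T_1,\dots,T_n\rrbracket$ (e.g.\ $T_1T_2$ for $n=2$) need not be a unit times $p^k$ times a distinguished polynomial \emph{in any of the given variables}; one must first perform a change of variables of the form $T_i \mapsto T_i + T_n^{N_i}$ to make the element regular in $T_n$ before preparation applies. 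This does not affect the induction, but as stated the step is not literally true. (The subsequent claim that the roots of a distinguished polynomial lie in the open unit disc is correct, by the Newton polygon, and correctly yields $|t_i|<1$ and hence continuity.) Second, the map from rig-points to maximal ideals is not injective as stated, since one may always enlarge $\cO$; the bijection in (1)$\Leftrightarrow$(2) is with rig-points for which $\cO$ is topologically generated by the image of $A$ (equivalently, with equivalence classes of rig-points), which is the normalisation implicit both in your construction of the inverse and in de Jong's statement. With these caveats your proof is complete and self-contained, which is more than the paper offers; the trade-off is length against the convenience of citing a standard reference.
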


\begin{proof} See \cite[7.1.9,7.1.10]{dejong-formalrigid}.
\end{proof}

We do not, however, obtain bijections between closed subvarieties of these geometric objects; closed subschemes of $\Spec(A[\frac{1}{p}])$ biject with closed formal subschemes of $\Spf A$ flat over $\ZZ_p$, but these correspond to a subset of the closed subvarieties of $\Rig(A)$. Most of the content of the present note relates in some way or another to the following key example. If $A = \ZZ_p \llbracket T \rrbracket$, then $\Rig(A)$ is the rigid-analytic open unit disc, and the set of points $T$ such that $(1 + T)^{p^n} = 1$ for some $n \in \mathbb{N}$ is a closed subvariety of $\Rig(A)$ (cut out by the $p$-adic logarithm $\log(1 + T)$), which is clearly not the analytification of any closed subvariety of $\Spf(A)$.

If $P(A)$ is the common set of points from the preceding proposition, we refer to the topology on $P(A)$ whose closed subsets are given by ideals of $A[\frac{1}{p}]$ (or, equivalently, closed subvarieties of $\Spf(A)$ flat over $\ZZ_p$) as the \emph{formal Zariski topology}, and the topology whose closed subsets are given by rigid-analytic subvarieties of $\Rig(A)$ as the \emph{rigid Zariski topology}. As the preceding example shows, the rigid Zariski topology may be strictly finer than the formal Zariski topology.

\section{Character spaces}

Let $G$ be an abelian $p$-adic analytic group; equivalently, $G$ is any group of the form $\ZZ_p^d \times H$, for $d \ge 0 \in \ZZ$ and $H$ a finite abelian group.

\begin{theorem}
 The functor mapping an Artinian local $\ZZ_p$-algebra $A$ to the set of continuous group homomorphisms $G \to A^\times$ is pro-representable, and is represented by the formal scheme $\widehat G = \Spf \ZZ_p\llbracket G \rrbracket$, where $\ZZ_p\llbracket G \rrbracket$ is the Iwasawa algebra of $G$, equipped with the canonical character $G \to \ZZ_p\llbracket G\rrbracket^\times$. Moreover, the generic fibre $\widehat G^\rig$ of $\widehat G$ is the rigid space constructed in \cite[Lemma 2]{buzzard-families} which represents the corresponding functor on the category of affinoid $\QQ_p$-algebras.
\end{theorem}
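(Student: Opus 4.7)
The plan is to exploit the decomposition $G \cong \ZZ_p^d \times H$ to split the argument into a pro-$p$ part and a finite part, and then recombine via the factorization $\ZZ_p\llbracket G \rrbracket \cong \ZZ_p\llbracket T_1, \dots, T_d \rrbracket \otimes_{\ZZ_p} \ZZ_p[H]$ of Iwasawa algebras (where $T_i = [e_i]-1$ for chosen topological generators $e_i$ of $\ZZ_p^d$). Since the functor of continuous characters sends direct products of groups to products of functors, and $\Spf$ of a tensor product is the fibre product of formal schemes, it suffices to prove pro-representability separately in the two cases $G = H$ finite and $G = \ZZ_p$.

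For $H$ finite, a group homomorphism $H \to A^\times$ is automatically continuous and is the same datum as a $\ZZ_p$-algebra map $\ZZ_p[H] \to A$, so pro-representability by $\Spf \ZZ_p[H]$ (a disjoint union of components indexed by the $\overline{\QQ_p}$-characters of $H$) is immediate. For $G = \ZZ_p$, a continuous character $\chi \colon \ZZ_p \to A^\times$ is determined by $u := \chi(1)$, and by continuity $u^{p^n} \to 1$, which forces $u \in 1 + \mathfrak{m}_A$ because $A$ is Artinian local; conversely any such $u$ extends uniquely to a character since $1 + \mathfrak{m}_A$ is then uniquely $p$-divisible. Hence the functor $A \mapsto 1 + \mathfrak{m}_A$ is pro-represented by $\Spf \ZZ_p\llbracket T \rrbracket$ via $T \mapsto u - 1$, where $T$ corresponds to the group-like element $[1] - 1$ of the Iwasawa algebra.

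For the comparison of $\widehat G^{\rig}$ with Buzzard's character space, the cleanest approach is to show that both objects represent the same functor on affinoid $\QQ_p$-algebras $B$. Unwinding Berthelot's construction, a $B$-valued point of $\widehat G^\rig$ is a continuous $\ZZ_p$-algebra map $\ZZ_p\llbracket G \rrbracket \to B$ sending the topologically nilpotent generators $T_i$ into $B^{\circ\circ}$; via the decomposition above this amounts to a tuple $(u_1, \dots, u_d) \in (1 + B^{\circ\circ})^d$ together with a $\ZZ_p$-algebra map $\ZZ_p[H] \to B$, which is exactly the data of a continuous character $G \to B^\times$ in the sense of \cite{buzzard-families}.

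The main technical hurdle is this last step: one has to pin down precisely what ``continuous'' means on the rigid side and verify that Berthelot's generic fibre of $\Spf \ZZ_p\llbracket T \rrbracket$ really produces the rigid open (rather than closed) unit disc with the functor of points described above. This is standard, but it is the only substantive comparison in the argument; once it is in place, the rest reduces to formal manipulations with representability and products.
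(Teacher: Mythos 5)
Your overall strategy is sound and differs from the paper's mainly in where the work is done. The paper establishes pro-representability in one stroke for arbitrary $G$, using the universal property of the Iwasawa algebra (continuous characters $G \to A^\times$ correspond to ring homomorphisms $\ZZ_p\llbracket G\rrbracket \to A$) together with the presentation of $\ZZ_p\llbracket G\rrbracket$ as the inverse limit of the Artinian rings $(\ZZ/p^n\ZZ)\llbracket G/U\rrbracket$; the decomposition of $G$ into factors, and the compatibility of the construction with products, are invoked only for the second assertion, in order to reduce the comparison with Buzzard's space to the cases $G = \ZZ_p$ and $G$ finite cyclic (which the paper, like you, then declares easy). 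Your version front-loads the splitting $G \cong \ZZ_p^d \times H$ and verifies each factor by hand, which is workable and more concrete, at the cost of depending on a chosen splitting that the paper's argument avoids. You also correctly isolate the one substantive verification --- that Berthelot's generic fibre of $\Spf \ZZ_p\llbracket T\rrbracket$ is the \emph{open} unit disc with the expected functor of points --- but, again like the paper, you do not actually carry it out.

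Two local justifications in your write-up are wrong, even though the statements they are meant to support are true. First, $1 + \mathfrak{m}_A$ is \emph{not} uniquely $p$-divisible: for $A = \ZZ/p^2\ZZ$ the group $1 + p\ZZ/p^2\ZZ$ is killed by $p$, so $1$ is its only $p$-th power. What you actually need is the opposite phenomenon: for the test objects relevant here ($\mathfrak{m}_A$ nilpotent and $p$ nilpotent in $A$) one has $u^{p^n} = 1$ for $n \gg 0$, so $1 + \mathfrak{m}_A$ is a $p$-power-torsion abelian group and therefore carries a canonical $\ZZ_p$-module structure $\lambda \mapsto u^{\lambda}$; this, together with the density of $\ZZ$ in $\ZZ_p$, is what lets $1 \mapsto u$ extend uniquely to a continuous character of $\ZZ_p$. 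Second, $\Spf \ZZ_p[H]$ is not a disjoint union of components indexed by the characters of $H$ when $p$ divides $|H|$: for example $\ZZ_p[\ZZ/p\ZZ] \cong \ZZ_p[x]/(x^p - 1)$ reduces modulo $p$ to the local ring $\mathbb{F}_p[x]/(x-1)^p$, so it has no nontrivial idempotents and its formal spectrum is connected. That parenthetical is not load-bearing for representability, but since the entire point of the surrounding paper is the contrast between the formal scheme and its rigid generic fibre --- where the characters of $H$ \emph{do} separate into distinct components --- it is worth stating correctly.
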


\begin{proof}
Essentially by definition, any continuous homomorphism $G \to A^\times$ extends uniquely to a ring homomorphism $\ZZ_p\llbracket G\rrbracket \to A$, and conversely any ring homomorphism $\ZZ_p\llbracket G\rrbracket \to A$ gives a group homomorphism $G \to A^\times$ by composition with the canonical character (which is continuous, since $A$ is Artinian). Furthermore, $\ZZ_p\llbracket G\rrbracket$ can clearly be written as an inverse limit of the quotients $(\ZZ/p^n \ZZ)\llbracket G / U\rrbracket$ for $U$ open in $G$, which are Artinian $\ZZ_p$-algebras. Moreover, if $G_1$ and $G_2$ are two such groups, we have 
\[\ZZ_p\llbracket G_1 \times G_2\rrbracket = \ZZ_p\llbracket G_1\rrbracket \hat\otimes_{\ZZ_p} \ZZ_p\llbracket G_2\rrbracket;\]
the generic fibre construction commutes with fibre products, so it suffices to check that the generic fibre of $\Spf \ZZ_p\llbracket G\rrbracket$ agrees with Buzzard's construction when $G$ is either $\ZZ_p$ or a finite cyclic group; both of these cases are easy.
\end{proof}

Now let $K$ be a number field. We define
\[ \cO_{K, p}^\times := (\cO_K \otimes \ZZ_p)^\times = \prod_{v \mid p} \cO_{K, v}^\times.\]
It is clear that $\cO_{K, p}^\times$ is an abelian $p$-adic analytic group of dimension $d = [K : \QQ]$; we let $\mathcal{W} = \widehat{\cO_{K, p}^\times}$. A point of $\mathcal{W}$ is thus equivalent to a continuous homomorphism $\cO_{K, p}^\times \to E^\times$, for $E$ some finite extension of $\QQ_p$; we refer to these as \emph{$p$-adic weights} for $K$.

Let $K_\infty^\circ$ be the identity component of $(K \otimes \RR)^\times$, and $U$ any open compact subgroup of $\left(\mathbb{A}_K^{p, \infty}\right)^\times$. We define
\[ H(U) = \mathbb{A}_K^\times / \overline{K^\times \cdot U \cdot K^\circ_\infty}.\]

\begin{definition}\cite{buzzard-families}
 The \emph{eigenvariety} for $\GL_1 / K$ of tame level $U$ is the formal $\ZZ_p$-scheme $\mathcal{E}(U) = \widehat{H(U)}$.
\end{definition}

The inclusion $\cO_{K, p}^\times \into \mathbb{A}_K^\times$ gives a continous map $\cO_{K, p}^\times \to H(U)$ whose kernel is the closure in $\cO_{K, p}^\times$ of the abelian group $\Gamma(U) = K^\times \cap \left(U \cdot \cO_{K, p}^\times \cdot K_\infty^\circ\right)$. The cokernel of this map is finite (it is the ray class group modulo $UK_\infty^\circ$) and hence $H(U)$ is also a compact abelian $p$-adic analytic group, of dimension equal to $1 + r_2 + \delta$ where $r_2$ is the number of complex places of $K$ and $\delta$ is the defect in Leopoldt's conjecture for $K$ at $p$.

If we write $Q(U) = \cO_{K, p}^\times / \overline{\Gamma(U)}$, then we can identify $Q(U)$ with a finite-index subgroup of $H(U)$; hence we have maps $\ZZ_p\llbracket \cO_{K, p}^\times \rrbracket \onto \ZZ_p\llbracket Q(U)\rrbracket \into \ZZ_p\llbracket H(U)\rrbracket$, where the second map is finite and flat (and becomes \'etale after inverting $p$). Thus the morphism $\mathcal{E}(U) \to \mathcal{W}$ factors as a finite flat surjective map followed by the inclusion of the closed subscheme $\mathcal{W}(U) = \widehat{Q(U)}$ of $\mathcal{W}$. In particular, we have the following result:

\begin{proposition}\label{dimension}
 Every component of $\mathcal{E}(U)$ has dimension equal to $1 + r_2 + \delta$, and maps surjectively to a component of $\mathcal{W}(U)$.
\end{proposition}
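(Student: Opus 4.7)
The plan is to reduce both assertions to routine commutative algebra on the Iwasawa algebra, exploiting the factorisation $\mathcal{E}(U) \to \mathcal{W}(U) \hookrightarrow \mathcal{W}$ that the excerpt has already set up.

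For the dimension statement, I would invoke the structure theorem for compact abelian $p$-adic analytic groups to write $H(U) \cong \ZZ_p^d \times F$ with $d = 1 + r_2 + \delta$ and $F$ a finite abelian group, so that
\[
\ZZ_p\llbracket H(U)\rrbracket \;\cong\; \ZZ_p[F]\llbracket T_1, \dots, T_d\rrbracket.
\]
This ring is finite and flat over the regular local ring $\ZZ_p\llbracket T_1, \dots, T_d\rrbracket$, so its minimal primes correspond bijectively to those of $\ZZ_p[F]$. After inverting $p$, $\QQ_p[F]$ splits as a product of finite extensions $L$ of $\QQ_p$ indexed by $\mathrm{Gal}(\overline{\QQ_p}/\QQ_p)$-orbits of characters of $F$; each contributes an irreducible component whose rigid generic fibre is a $d$-dimensional open polydisc over $L$. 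This gives the dimension count.

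For the surjectivity on components, I would use the fact already noted in the excerpt that $\ZZ_p\llbracket Q(U)\rrbracket \hookrightarrow \ZZ_p\llbracket H(U)\rrbracket$ is finite and flat. Because $Q(U)$ has finite index in $H(U)$, the bigger Iwasawa algebra is in fact free of rank $[H(U):Q(U)]$ over the smaller one, hence faithfully flat. A finite faithfully flat morphism of Noetherian (formal) schemes is closed and surjective, and flatness gives going-down, so every generic point upstairs maps to a generic point downstairs; combining these, each irreducible component of $\mathcal{E}(U)$ is mapped onto an irreducible component of $\mathcal{W}(U)$.

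I do not expect any real obstacle. The main point requiring care is keeping the different notions of ``component'' aligned: ``components of $\mathcal{E}(U)$'' should mean irreducible components of the formal scheme (equivalently, of its rigid generic fibre, since the two match up here once one passes to the $p$-torsion-free quotient). With that convention in force, both parts of the proposition follow from the structural calculations above.
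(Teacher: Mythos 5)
Your argument is correct and follows essentially the same route as the paper, which states the proposition as an immediate consequence of the preceding observations that $H(U)$ is a compact abelian $p$-adic analytic group of dimension $1 + r_2 + \delta$ and that $\mathcal{E}(U) \to \mathcal{W}(U)$ is a finite flat surjection onto a closed subscheme of $\mathcal{W}$. You have simply made explicit the standard commutative-algebra details (the splitting $H(U) \cong \ZZ_p^d \times F$, freeness of $\ZZ_p\llbracket H(U)\rrbracket$ over $\ZZ_p\llbracket Q(U)\rrbracket$, and going-down) that the paper leaves implicit.
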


(Note that $\mathcal{E}(U)$ is not flat over $\mathcal{W}$ unless $r_1 + r_2 = 1$, i.e. $K$ is either $\QQ$ or an imaginary quadratic field.)

\section{Algebraic points}

Let $\kappa$ be a $p$-adic weight for $K$. We say that $\kappa$ is \emph{algebraic} if we can write
\[ \kappa(x) = \prod_{i} \sigma_i(x)^{n_i}\]
where $\sigma_1, \dots, \sigma_d$ are the ring homomorphisms $\cO_{K, p} \to \overline{\QQ}_p$ arising from the $d$ embeddings $K \into \overline{\QQ}_p$, and $n_i \in \ZZ$. We say $\kappa$ is \emph{parallel} if $\kappa$ factors through the norm map $N_{K/\QQ}$ (extended $\ZZ_p$-linearly to a ring homomorphism $\cO_{K, p} \to \ZZ_p$). Note that an algebraic weight is parallel if and only if the $n_i$ are all equal.

If $\kappa$ is algebraic in the above sense when restricted to some open neighbourhood of the identity, we say $\kappa$ is \emph{locally algebraic}; this is equivalent to the existence of a factorisation $\kappa = \varepsilon \kappa'$ where $\kappa'$ is algebraic and $\varepsilon$ has finite order. Similarly, if $\kappa$ is a $p$-adic weight which becomes parallel when restricted to some open neighbourhood of the identity, we say $\kappa$ is \emph{locally parallel}. 

Let us fix an isomorphism between $\mathbb{C}$ and $\overline{\QQ}_p$. Then there is a bijection between algebraic Gr\"ossencharacters of $K$ (of level containing $U$) and points of $\mathcal{E}(U)$ whose projection to $\mathcal{W}$ is locally algebraic. This maps a Gr\"ossencharacter of infinity-type $x \mapsto \prod_i \sigma_i(x)^{n_i}$ to a locally algebraic character with the same algebraic part.

We make the following assumption, which will remain in force for the remainder of this section:

\begin{assumption}
 The field $K$ contains no CM subfield.
\end{assumption}

\begin{theorem}[{\cite{weil56}}] \label{weilthm}
If the above assumption holds, then the infinity-type of every algebraic Gr\"ossencharacter of $K$ is parallel (i.e.~factors through the norm map $\operatorname{N}_{K/\QQ}$).
\end{theorem}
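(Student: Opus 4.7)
The plan is to combine three ingredients---a weight condition coming from triviality on units, Galois invariance from algebraicity of values, and a subfield argument using the no-CM hypothesis---to force the infinity-type to be parallel. Since $\chi$ is trivial on a finite-index subgroup $V\subseteq \cO_K^\times$ (those units in the kernel of $\chi_f$), one has $\prod_\sigma \sigma(u)^{n_\sigma}=1$ for every $u\in V$. Taking $\log|\cdot|$ and invoking Dirichlet's unit theorem---whose image is a full lattice in the hyperplane cut out by $\log|\sigma|=\log|\bar\sigma|$ and $\sum_\sigma \log|\sigma|=0$---this forces the \emph{weight condition} $n_\sigma+n_{\bar\sigma}=w$ for every complex pair and $n_\sigma=w/2$ at each real place, which already settles the totally real case.

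For the second ingredient, since $\chi$ is of type $A_0$ the finite part $\chi_f$ takes values in some number field $E\subseteq\overline{\QQ}$, so triviality of $\chi$ on $K^\times$ gives $\prod_\sigma \sigma(x)^{n_\sigma}\in E^\times$ for every $x\in K^\times$. Viewing $\mathbf{n}=(n_\sigma)$ as a character of the $\QQ$-torus $T=\operatorname{Res}_{K/\QQ}\mathbb{G}_m$, and using Zariski-density of $T(\QQ)=K^\times$ in $T_{\overline{\QQ}}$, this forces $\mathbf{n}$ to be $\operatorname{Gal}(\overline{\QQ}/E)$-invariant in $X^*(T)=\ZZ[\Hom(K,\overline{\QQ})]$; hence the reflex field $E_{\mathbf{n}}\subseteq\tilde K$ (the fixed field of the full stabilizer of $\mathbf{n}$) is contained in $E$. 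A direct calculation using $c\mathbf{n}=w\mathbf{1}-\mathbf{n}$ and the resulting identity $\operatorname{Stab}(c\mathbf{n})=\operatorname{Stab}(\mathbf{n})$ shows that $c$ normalizes $\operatorname{Stab}(\mathbf{n})$; furthermore $(\tau^{-1}c\tau)\mathbf{n}=c\mathbf{n}$ for all $\tau\in\operatorname{Gal}(\overline{\QQ}/\QQ)$, so either $c\in\operatorname{Stab}(\mathbf{n})$ (and $\mathbf{n}$ is already parallel by the weight condition) or $E_{\mathbf{n}}$ is CM with totally real index-$2$ subfield $E_{\mathbf{n}}^+$.

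The main obstacle is to rule out the CM case using the no-CM-subfield hypothesis. For each embedding $\sigma:K\to\overline{\QQ}$, the intersection $E_{\mathbf{n}}\cap\sigma(K)$ is a subfield of the CM field $E_{\mathbf{n}}$, hence itself totally real or CM; since $\sigma(K)\cong K$ has no CM subfield by assumption, it must be totally real and is therefore contained in $E_{\mathbf{n}}^+$. The automorphism $c|_{E_{\mathbf{n}}}$ thus fixes $E_{\mathbf{n}}\cap\sigma(K)$ pointwise, which permits factoring $c=ab$ with $a\in\operatorname{Gal}(\overline{\QQ}/E_{\mathbf{n}})$ and $b\in\operatorname{Gal}(\overline{\QQ}/\sigma(K))$; equivalently, complex conjugation preserves every $\operatorname{Gal}(\overline{\QQ}/E_{\mathbf{n}})$-orbit on $\Hom(K,\overline{\QQ})$ setwise. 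Since $\mathbf{n}$ is a $\ZZ$-linear combination of the characteristic functions of such orbits, we conclude $c\mathbf{n}=\mathbf{n}$, and the weight condition then gives $n_\sigma=w/2$ for all $\sigma$---that is, $\chi_\infty$ factors through $\operatorname{N}_{K/\QQ}$.
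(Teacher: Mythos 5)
The paper offers no proof of this statement---it is quoted directly from Weil's 1956 paper---so there is nothing internal to compare against; I am assessing your argument on its own terms. Your first two ingredients are sound: the unit-theorem computation giving $n_\sigma+n_{\bar\sigma}=w$ (for \emph{every} choice of complex conjugation, whence $(\tau^{-1}c\tau)\mathbf{n}=c\mathbf{n}=w\mathbf{1}-\mathbf{n}$), the rationality argument placing $E_{\mathbf{n}}$ inside a number field, and the deduction that $c$ normalises $\operatorname{Stab}(\mathbf{n})$ and that $E_{\mathbf{n}}$ is totally real or CM are all correct, and this is the right general shape of Weil's argument.

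The gap is in the third step, at the sentence ``which permits factoring $c=ab$ with $a\in\mathrm{Gal}(\overline{\QQ}/E_{\mathbf{n}})$ and $b\in\mathrm{Gal}(\overline{\QQ}/\sigma(K))$.'' Knowing that $c$ fixes $E_{\mathbf{n}}\cap\sigma(K)$ pointwise only tells you $c\in\mathrm{Gal}(\overline{\QQ}/E_{\mathbf{n}}\cap\sigma(K))$, and for two subfields $E,F$ the group $\mathrm{Gal}(\overline{\QQ}/E\cap F)$ is the \emph{closed subgroup generated by} $\mathrm{Gal}(\overline{\QQ}/E)$ and $\mathrm{Gal}(\overline{\QQ}/F)$, which is in general strictly larger than the product set $\mathrm{Gal}(\overline{\QQ}/E)\cdot\mathrm{Gal}(\overline{\QQ}/F)$. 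For instance, if $E$ and $F$ are two of the cubic subfields of an $S_3$-extension, then $E\cap F=\QQ$ but the product of the two order-$2$ stabilisers has only four elements, so a transposition fixing neither field need not factor. Since the factorisation is exactly the assertion that $c$ preserves each $\mathrm{Gal}(\overline{\QQ}/E_{\mathbf{n}})$-orbit on $\Hom(K,\overline{\QQ})$, and this is where the no-CM hypothesis is supposed to do its work, the gap sits at the heart of the proof. The standard way to close it is to transfer the CM dichotomy to a subfield of $K$ itself rather than to the reflex field: from $(\tau^{-1}c\tau)\mathbf{n}=w\mathbf{1}-\mathbf{n}$ one gets $f(\gamma c_1c_2)=f(\gamma)$ for the function $f(\gamma)=n_{\gamma|_K}-\tfrac{w}{2}$ and any two conjugates $c_1,c_2$ of $c$, so $f$ is right-invariant under the group generated by $G_K$ and all such products; its fixed field $K_1\subseteq K$ is stable under $c$ and all complex conjugations restrict to the same automorphism of $K_1$, so $K_1$ is totally real or CM. The hypothesis forces $K_1$ totally real, hence $c$ lies in the right stabiliser, hence $f=-f=0$ and $\mathbf{n}=\tfrac{w}{2}\mathbf{1}$.
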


If $\kappa$ is a locally algebraic weight, we define $c(\kappa)$ to be the smallest integer $r \ge 0$ such that $\kappa$ is algebraic when restricted to $1 + p^r \cO_{K, p}$.

\begin{proposition}
 For any $N < \infty$, there is a 1-dimensional closed formal subscheme of $\mathcal{W}$ that contains every locally algebraic weight $\kappa \in \mathcal{W}(U)$ with $c(\kappa) \le N$.
\end{proposition}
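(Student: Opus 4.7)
The plan is to exhibit an explicit $1$-dimensional closed formal subscheme $V_N$ of $\mathcal{W}$, constructed from the norm map on a small open subgroup of $\cO_{K, p}^\times$, and then to deduce from Theorem \ref{weilthm} that every relevant weight lies in it. The two ingredients one has to combine are the hypothesis $c(\kappa) \le N$, which says the ``finite-order part'' of $\kappa$ has bounded conductor, and the hypothesis $\kappa \in \mathcal{W}(U)$, i.e.\ triviality of $\kappa$ on $\overline{\Gamma(U)}$.

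Concretely, I would set $U_N = 1 + p^N \cO_{K, p}$ and let $H_N \subset U_N$ be the kernel of the restriction to $U_N$ of the norm $N_{K/\QQ}: \cO_{K, p}^\times \to \ZZ_p^\times$. Since $N_{K/\QQ}(U_N)$ is open in $\ZZ_p^\times$, the quotient $\cO_{K, p}^\times / H_N$ is a compact abelian $p$-adic analytic group of dimension exactly one; the surjection $\cO_{K, p}^\times \onto \cO_{K, p}^\times / H_N$ then induces a closed embedding of formal schemes $V_N := \widehat{\cO_{K, p}^\times / H_N} \into \mathcal{W}$, and this $V_N$ is $1$-dimensional, as required.

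Next let $\kappa \in \mathcal{W}(U)$ be locally algebraic with $c(\kappa) \le N$. By the definition of $c(\kappa)$, there exist integers $n_1, \dots, n_d$ with $\kappa(x) = \prod_i \sigma_i(x)^{n_i}$ on $U_N$, and since $\kappa \in \mathcal{W}(U)$ we have $\kappa|_{\overline{\Gamma(U)}} = 1$. Evaluating both identities on $\Gamma_N := \Gamma(U) \cap U_N$ yields
\[ \prod_i \sigma_i(\gamma)^{n_i} = 1 \qquad \text{for all } \gamma \in \Gamma_N. \]
Because $\Gamma_N$ is a finite-index subgroup of $\cO_K^\times$, Theorem \ref{weilthm} forces the tuple $(n_i)$ to be parallel, say $n_i = n$ for all $i$. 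Hence $\kappa|_{U_N} = N_{K/\QQ}^n$, which is trivial on $H_N$, so $\kappa$ itself factors through $\cO_{K, p}^\times / H_N$ and lies in $V_N$.

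The one step that requires real care is the appeal to Weil's theorem, because Theorem \ref{weilthm} is stated for Gr\"ossencharacters, whereas the hypothesis produced above is only that an algebraic character of $K_\infty^\times$ is trivial on the finite-index subgroup $\Gamma_N$ of $\cO_K^\times$. The bridge between the two is standard: after a finite-order twist on the finite group $\cO_K^\times / \Gamma_N$ one extends the character to an idele class character, using the fixed identification $\CC \simeq \overline{\QQ}_p$ to match $p$-adic and complex embeddings. This is the only non-formal ingredient; the quotient construction, the dimension count, and the factoring of $\kappa$ through $V_N$ are purely mechanical.
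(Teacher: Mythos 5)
Your argument is correct and follows essentially the same route as the paper: Weil's theorem forces the algebraic part of $\kappa$ to be a power of the norm, and the bound $c(\kappa) \le N$ confines the finite-order part to a character of a fixed finite quotient. The only difference is packaging — you realise the containing subscheme as the character space $\widehat{\cO_{K,p}^\times/H_N}$ of the quotient by the norm-one subgroup of $1+p^N\cO_{K,p}$, while the paper describes it as a finite union of finite-order twists of the parallel-weight line $\mathcal{W}_0$; these are essentially the same 1-dimensional closed formal subscheme, and your passage from triviality of $\prod_i\sigma_i^{n_i}$ on a finite-index subgroup of $\cO_K^\times$ to an algebraic Gr\"ossencharacter is the standard bridge the paper also relies on implicitly.
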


\begin{proof}
If $\kappa \in \mathcal{W}(U)$ is locally algebraic, then by Weil's theorem it must be of the form $x \mapsto \varepsilon(x) N_{K / \QQ}(x)^k$ for some $k \in \mathbb{Z}$ and finite-order $\varepsilon$. Since the subgroup $\overline{\Gamma(U)} \cdot (1 + p^N \cO_{K, p})$ has finite index in $\cO_{K, p}^\times$, there are only finitely many candidates for $\varepsilon$. Hence the locally algebraic weights with $c(\kappa) \le N$ are contained in the union of finitely many translates of the 1-dimensional subscheme $\mathcal{W}_0 \subseteq \mathcal{W}$ parametrising parallel weights (which is simply the space of characters of $N_{K / \QQ} \left(\cO_{K, p}^\times\right) \subseteq \ZZ_p^\times$).
\end{proof}

We assume henceforth that $K$ is not totally real, so $\mathcal{W}(U)$ has dimension $1 + r_2 + \delta > 1$. It follows that the locally algebraic weights with $c(\kappa) \le N$ are not dense in the formal Zariski topology of $\mathcal{W}(U)$. In particular, for a fixed coefficient field $E$ which is discretely valued, the set of $E$-valued finite-order characters is finite (since $E$ contains finitely many $p$-power roots of unity) and thus the locally algebraic $E$-valued weights are not formally Zariski-dense in $\mathcal{W}(U)$. 

\begin{proposition}
 The closure of the locally algebraic weights in the rigid Zariski topology of $\mathcal{W}(U)$ is a closed rigid subvariety of $W(U)$ of dimension 1. However, this set is dense in the formal Zariski topology of $\mathcal{W}(U)$.
\end{proposition}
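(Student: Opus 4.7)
The plan is to construct a $1$-dimensional closed rigid subvariety $V \subseteq \mathcal{W}(U)^{\rig}$ that sandwiches the rigid-Zariski closure of the locally algebraic weights between itself and the union $\bigcup_\varepsilon \varepsilon \mathcal{W}_0$, and then to prove the formal-Zariski density by an iterated Weierstrass preparation argument. By Theorem \ref{weilthm} every locally algebraic weight is of the form $\kappa = \varepsilon \cdot N_{K/\QQ}^k$ for some finite-order $\varepsilon$ and some $k \in \ZZ$, so all such weights lie in $\bigcup_\varepsilon \varepsilon \mathcal{W}_0$ where $\varepsilon$ ranges over finite-order characters of $Q(U)$.

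For the first half, fix topological generators $\gamma_1, \dots, \gamma_d$ of a finite-index free subgroup of $Q(U)$, with $d = 1 + r_2 + \delta$, and work in local coordinates $T_i = \kappa(\gamma_i) - 1$ on a connected component of $\mathcal{W}(U)^{\rig}$. Set $\alpha_i = N_{K/\QQ}(\gamma_i) \in \ZZ_p^\times$ and define $V$ by the system of rigid-analytic equations
\[ \log(1 + T_i) \cdot \log(\alpha_j) = \log(1 + T_j) \cdot \log(\alpha_i), \qquad 1 \le i, j \le d,\]
or intrinsically by the condition that the infinitesimal character $\mathrm{d}\kappa \colon p \cO_{K, p} \to \overline{\QQ}_p$ is a scalar multiple of $\operatorname{Tr}_{K/\QQ}$. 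A Jacobian computation shows these cut out a subvariety of generic dimension $1$, and a direct check shows $\varepsilon \mathcal{W}_0 \subseteq V$ for every torsion $\varepsilon$. Conversely, the classical parallel weights $\{\varepsilon N^k : k \in \ZZ\}$ have accumulation points inside the irreducible one-dimensional curve $\varepsilon \mathcal{W}_0$ (since $\ZZ$ is topologically dense in $\ZZ_p$), so any rigid-analytic function vanishing on them vanishes on the whole of $\varepsilon \mathcal{W}_0$. The rigid-Zariski closure of the locally algebraic weights is therefore trapped between $\bigcup_\varepsilon \varepsilon \mathcal{W}_0$ and $V$, hence is a $1$-dimensional closed rigid subvariety.

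For the formal-Zariski density, it suffices (working on each connected component and using that every finite-order character is itself locally algebraic, corresponding to $k = 0$) to show that no nonzero element of $A_d := \ZZ_p \llbracket T_1, \dots, T_d \rrbracket [\tfrac{1}{p}]$ vanishes at every point $(\zeta_1 - 1, \dots, \zeta_d - 1)$ with $\zeta_i \in \mu_{p^\infty}$. I prove this by induction on $d$: the case $d = 1$ follows from Weierstrass preparation, which writes a nonzero $f$ as $p^a u P$ with $P$ a distinguished polynomial of finite degree, and hence only finitely many zeros in the open disc; for $d > 1$, expand $f = \sum_n g_n(T_2, \dots, T_d) T_1^n$ with $g_n \in A_{d-1}$ and apply the $d = 1$ case (over $\QQ_p(\zeta_2, \dots, \zeta_d)$) to each specialisation $f(T_1, \zeta_2 - 1, \dots, \zeta_d - 1) = 0$ to deduce $g_n(\zeta_2 - 1, \dots, \zeta_d - 1) = 0$, whence $g_n = 0$ by the inductive hypothesis.

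The crux of the argument is the construction of $V$: its defining equations use $\log(1 + T_i)$, which is an unbounded analytic function on the open disc and therefore does \emph{not} belong to $A_1$. It is precisely this mismatch between bounded and unbounded analytic functions that permits the rigid closure to be a proper subvariety while the formal closure fills out the whole of $\mathcal{W}(U)$.
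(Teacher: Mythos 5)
Your proof is correct, and for the main statement it follows essentially the paper's route: cut out a one\--dimensional closed rigid subvariety by linear conditions on $\log\kappa$, use Weil's theorem to place every locally algebraic weight on it, and obtain the lower bound from the rigid accumulation of the integer powers of the norm character. Your $V$ (proportionality of $\log\kappa$ to $\log N_{K/\QQ}$ on a set of generators) is the same subvariety the paper defines by the vanishing of $\kappa \mapsto \log\kappa(u_i)$, where $u_1, \dots, u_{d-1}$ is a basis of the torsion\--free part of the norm\--one subgroup $C = \{x : N_{K/\QQ}(x) = 1\}$; these are two descriptions of the annihilator of $\operatorname{Lie}(C)$, and the paper's version saves you the Jacobian computation (independence at the origin propagates everywhere by translation). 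Where you genuinely diverge is the formal\--density half. Both arguments reduce to showing the finite\--order characters are formally dense, but the paper disposes of this in one line: $\ZZ_p\llbracket Q(U)\rrbracket$ is the inverse limit of the group rings $\ZZ_p[Q(U)/U']$ of the finite quotients, so a nonzero element survives to some finite level, where it is detected by a character of a finite abelian group. You instead prove that the $p$\--power torsion points are dense in $\Spf \ZZ_p\llbracket T_1, \dots, T_d\rrbracket$ by induction on $d$ via Weierstrass preparation. Both work; the paper's is coordinate\--free and shorter, while yours is more elementary and makes explicit the finiteness\--of\--zeros input (and correctly handles the enlargement of the coefficient ring in the induction). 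Your closing observation that the unboundedness of $\log(1+T)$ is what separates the two topologies is exactly the paper's key example from Section 1.
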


\begin{proof}
 Let $u_1, \dots, u_{d-1}$ be a $\ZZ_p$-basis for the torsion-free part of the subgroup
\[ C = \left\{ x \in \cO_{K, p}^\times : N_{K/\QQ}(x) = 1\right\}.\]

The functions $\kappa \mapsto \log(\kappa(u_i))$ are analytic functions on $\mathcal{W}^\rig$. Moreover, the derivatives of these functions are linearly independent at the origin, and hence anywhere (since they are homomorphisms of rigid-analytic group varieties). Thus they cut out a reduced rigid subvariety of $\mathcal{W}^\rig$ of dimension 1. I claim that every locally algebraic point of $\mathcal{W}(U)$ lies in this subvariety. Indeed, suppose $\kappa$ is such a point, with residue field $E$. Then $\kappa(C)$ must be finite, since the algebraic part of $\kappa$ is trivial on $C$. Therefore $\kappa(u_1), \dots, \kappa(u_d)$ must be roots of unity in $E^\times$; as the subgroup of $C$ generated by the $u_i$ is pro-$p$, these must be $p$-power roots of unity. Hence they are zeros of the $p$-adic logarithm.

On the other hand, the even powers of the norm character $\cO_{K, p}^\times \to \ZZ_p^\times$ are clearly in $\mathcal{W}(U)$, and the closure of these (in either the formal or the rigid Zariski topology) is a formal subscheme of $\mathcal{W}$ of dimension 1; so the dimension of the rigid Zariski closure of the locally algebraic weights in $\mathcal{W}(U)$ is exactly 1.

For the second statement, since $\mathcal{W}(U) = \widehat{Q(U)}$ is affine, it suffices to check that there is no nonzero element of $\ZZ_p\llbracket Q(U)\rrbracket$ whose image under any locally constant character is zero. This is clear since $\ZZ_p\llbracket Q(U)\rrbracket$ is by construction the inverse limit of the $\ZZ_p$-group rings of the finite quotients of $Q(U)$.
\end{proof}

We now lift these statements to $\mathcal{E}(U)$. If $\chi$ is a point of $\mathcal{E}(U)$, we say $\chi$ is locally algebraic if its image $\kappa \in \mathcal{W}(U)$ is so (equivalently, if it corresponds to an algebraic Gr\"ossencharacter of $K$); if this is the case, we define $c(\chi) = c(\kappa)$, which is the smallest power of $p$ divisible by the $p$-part of the conductor of the corresponding Gr\"ossencharacter.

\begin{proposition}
 For any $N < \infty$, the set of points $\chi \in \mathcal{E}(U)$ with $c(\chi) < N$ (or with values in a given coefficient field $E$) is contained in a finite union of 1-dimensional closed subschemes of $\mathcal{E}(U)$. The set of all locally algebraic points is not contained in any proper closed subscheme of $\mathcal{E}(U)$, but is contained in a 1-dimensional closed subvariety of the generic fibre $\mathcal{E}(U)^\rig$.
\end{proposition}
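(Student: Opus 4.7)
The plan is to deduce all three claims from the corresponding statements for $\mathcal{W}(U)$, proved in the preceding two propositions, via the finite, flat, surjective morphism $\pi\colon \mathcal{E}(U) \to \mathcal{W}(U)$ established in \S 2. By definition $\chi \in \mathcal{E}(U)$ is locally algebraic exactly when $\pi(\chi) \in \mathcal{W}(U)$ is, and in that case $c(\chi) = c(\pi(\chi))$; consequently the three loci of interest (points with $c(\chi) \le N$, $E$-valued locally algebraic points, and all locally algebraic points) are precisely the $\pi$-preimages of the analogous loci in $\mathcal{W}(U)$.

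For the first assertion, I would take the 1-dimensional closed formal subscheme $Z_N \subset \mathcal{W}$ supplied by the first proposition of this section, which is a finite union of translates of $\mathcal{W}_0$ and contains every locally algebraic $\kappa \in \mathcal{W}(U)$ with $c(\kappa) \le N$. Its preimage $\pi^{-1}(Z_N)$ is a closed subscheme of $\mathcal{E}(U)$; since $\pi$ is finite, every irreducible component has dimension $1$. The same pullback trick handles the $E$-valued variant, because for a fixed discretely valued coefficient field $E$ there are only finitely many finite-order $E$-valued characters of $\cO_{K,p}^\times$, so by Weil's theorem the $E$-valued locally algebraic weights lie in a finite union of translates of $\mathcal{W}_0$.

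For the formal Zariski density, I would simply rerun the argument that closed the previous proposition: $\mathcal{E}(U) = \Spf \ZZ_p\llbracket H(U)\rrbracket$ and $\ZZ_p\llbracket H(U)\rrbracket$ is the inverse limit of the $\ZZ_p$-group rings of the finite quotients of $H(U)$, so no nonzero element of the Iwasawa algebra can vanish at every finite-order character of $H(U)$. Any such finite-order character restricts to a finite-order character of $\cO_{K,p}^\times$, which is locally algebraic (with trivial algebraic part); hence the locally algebraic points of $\mathcal{E}(U)$ cannot be contained in any proper closed formal subscheme.

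Finally, for the rigid statement, I would pull back along $\pi^\rig$ the 1-dimensional closed rigid subvariety $V \subset \mathcal{W}^\rig$ cut out by $\log\kappa(u_1), \dots, \log\kappa(u_{d-1})$, constructed and shown to contain every locally algebraic weight in $\mathcal{W}(U)$ in the previous proof. Since $\pi^\rig$ is finite, $(\pi^\rig)^{-1}(V)$ is a closed rigid subvariety of $\mathcal{E}(U)^\rig$ of dimension $1$, and it contains every locally algebraic point by the compatibility between local algebraicity and $\pi$. None of these steps presents a serious obstacle: the proof is a formal transfer of the $\mathcal{W}(U)$ results through the finite flat projection $\pi$, and the only real thing to verify is that local algebraicity and the conductor invariant transport correctly, which is built into the definitions.
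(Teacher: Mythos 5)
Your proof is correct and follows exactly the route the paper intends (the paper states the proposition without proof, having just said the statements are ``lifted'' from $\mathcal{W}(U)$): pull back the $1$-dimensional loci through the finite flat map $\mathcal{E}(U)\to\mathcal{W}(U)$, and prove formal density directly via finite-order characters of $H(U)$. The one genuinely non-formal point --- that density upstairs does not follow by pulling back density downstairs, so one must rerun the inverse-limit argument for $\ZZ_p\llbracket H(U)\rrbracket$ and observe that finite-order characters of $H(U)$ are locally algebraic --- is handled correctly.
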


It follows that a rigid-analytic function on $\mathcal{E}(U)^\rig$ is not determined by its values at locally algebraic characters, but that a \emph{bounded} rigid-analytic function is determined by these values. 

\section{Sketch of the \texorpdfstring{$\GL_2$}{GL(2)} theory}

We now suppose $K$ is an imaginary quadratic field in which $p$ splits, and $\mathfrak{N}$ an integral ideal of $\cO_K$ prime to $p$. For integers $a, b \ge 2$, we let $S_{a, b}(\Gamma_1(\mathfrak{N}p^r))$ denote the space of cuspidal cohomological automorphic forms for $\GL_2 / K$ of weight $(a, b)$ and level $\Gamma_1(\mathfrak{N}p^r)$; equivalently, this is the space $H^1_\mathrm{par}(Y_1(\mathfrak{N}p^r), \mathcal{V}_{a, b})$ where $Y_1(\mathfrak{N}p^r)$ is the appropriate arithmetic quotient of $\GL_2(\mathbb{A}_K)$ and $\mathcal{V}_{a, b}$ is the locally constant sheaf on $Y_1(\mathfrak{N}p^r)$ corresponding to the algebraic representation of $\operatorname{Res}_{K / \mathbb{Q}} \GL_2$ of highest weight $(a, b)$. This is a finite-dimensional vector space over $K$.

We fix a choice of prime $\mathfrak{p} \mid p$, and hence an embedding $K \into \QQ_p$. For a locally constant character $\chi$ of $\cO_{K, p}^\times$ of conductor $c$, with values in a finite extension $E$ of $\QQ_p$, we let $S_{a, b}(\Gamma_1(\mathfrak{N}p^c), \chi)_\mathrm{ord}$ denote the subspace of $S_{a, b}(\Gamma_1(\mathfrak{N}p^c)) \otimes_{K} E$ of forms on which the diamond operators act via $\chi$ and which are ordinary at $\mathfrak{p}$ and $\overline{\mathfrak{p}}$.

We say that a locally algebraic weight $\kappa : x \mapsto x^a \overline{x}^b \varepsilon(x) \in \mathcal{W}$, with $\varepsilon$ of finite order, is \emph{arithmetical} if $a, b \ge 2$. 

\begin{theorem}[{\cite[Theorem 3.2]{hida94}}]
 There exists a finitely-generated $\ZZ_p\llbracket\cO_{K, p}^\times\rrbracket$-module $\mathbb{H}$ such that for any arithmetical character $\kappa$ as above,
 \[ S_{a, b}(\Gamma_1(\mathfrak{N}p^c, \varepsilon)_\mathrm{ord} \cong \mathbb{H} \otimes_{\ZZ_p\llbracket\cO_{K, p}^\times\rrbracket, \kappa} E.\]
\end{theorem}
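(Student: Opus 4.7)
The plan is to imitate Hida's original construction over $\QQ$, working with the tower of arithmetic hyperbolic $3$-orbifolds $Y_1(\mathfrak{N}p^r)$ for $r \ge 1$. Set $\Lambda = \ZZ_p\llbracket\cO_{K,p}^\times\rrbracket$.

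First, I would construct a $\Lambda$-adic cohomology module. By a Shapiro's lemma argument, the inverse system of parabolic cohomology groups $H^1_{\mathrm{par}}(Y_1(\mathfrak{N}p^r), \ZZ_p)$ can be re-expressed as the parabolic cohomology of a fixed level $Y_1(\mathfrak{N}p)$ with coefficients in a local system built from the finite group algebras $\ZZ_p[\cO_{K,p}^\times / (1 + p^r\cO_{K,p})]$. Passing to Pontryagin duals and taking the limit produces a compact $\Lambda$-module equipped with an action of the relevant Hecke algebra, including the two operators $U_\mathfrak{p}$ and $U_{\overline{\mathfrak{p}}}$ attached to the two primes above $p$.

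Second, I would define $\mathbb{H}$ to be the ordinary part of this module, cut out by Hida's idempotent $e = \lim_n (U_\mathfrak{p} U_{\overline{\mathfrak{p}}})^{n!}$. The finite generation of $\mathbb{H}$ over $\Lambda$ then follows from the standard bound: the rank of the ordinary subspace of $H^1_{\mathrm{par}}(Y_1(\mathfrak{N}p^r), \cO/p^n)$ is bounded independently of $r$ and $n$, and Nakayama's lemma applied at the maximal ideal of $\Lambda$ converts this into finite generation.

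Third, and most importantly, I would prove the control theorem: for an arithmetical weight $\kappa: x \mapsto x^a \overline{x}^b \varepsilon(x)$ with $a, b \ge 2$, the specialization map
\[ \mathbb{H} \otimes_{\Lambda, \kappa} E \longrightarrow S_{a,b}(\Gamma_1(\mathfrak{N}p^c), \varepsilon)_{\mathrm{ord}} \]
is an isomorphism. This has two ingredients: identifying the specialization of the $\Lambda$-adic coefficient system at $\kappa$ with the classical algebraic local system $\mathcal{V}_{a,b}$ twisted by $\varepsilon$, and showing that the ordinary projector $e$ annihilates the higher $\mathrm{Tor}$ groups measuring the failure of specialization to be exact.

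The main obstacle is the control theorem. Unlike the $\GL_2/\QQ$ case, where modular curves are algebraic and one may appeal to $p$-adic Hodge theory and the geometry of Igusa towers, here $Y_1(\mathfrak{N}p^r)$ is a real $3$-orbifold, so all arguments must be conducted topologically. One must carefully track the interplay between parabolic and full cohomology under the ordinary projector (Eisenstein contributions can in principle survive the projector), and verify the delicate exactness property that makes the specialization an isomorphism. The hypothesis $a, b \ge 2$ is precisely what is needed to kill boundary contributions to specialization; for smaller weights, one would pick up extra terms from the degenerate limits of the highest-weight modules.
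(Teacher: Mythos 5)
The paper offers no proof of this statement: it is imported verbatim from Hida \cite[Theorem 3.2]{hida94}, so there is no internal argument to compare yours against. Judged as a reconstruction of Hida's proof, your outline has the right architecture and matches his strategy: Shapiro's lemma to trade the tower $Y_1(\mathfrak{N}p^r)$ for a fixed level with coefficients in the group rings $\ZZ_p[\cO_{K,p}^\times/(1+p^r\cO_{K,p})]$, Pontryagin duality and a limit to get a compact $\ZZ_p\llbracket\cO_{K,p}^\times\rrbracket$-module, the idempotent built from $U_{\mathfrak{p}}U_{\overline{\mathfrak{p}}}$ to cut out the ordinary part, bounded ranks plus topological Nakayama for finite generation, and a control theorem at arithmetical weights.

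What you have written is a blueprint rather than a proof: the entire mathematical content of the theorem sits in the third step, which you correctly identify as ``the main obstacle'' but do not carry out. Concretely, the specialization map arises from the surjection of the induced $\Lambda$-adic coefficient module onto $\mathcal{V}_{a,b}\otimes\varepsilon$, and exactness of the ordinary part requires showing that $e$ annihilates the cohomology of the kernel of that surjection; Hida does this degree by degree, and since the cuspidal cohomology of these hyperbolic $3$-orbifolds lives in degrees $1$ and $2$, one must also control the Borel--Serre boundary exact sequence relating full, compactly supported, and parabolic cohomology. Two smaller points: the condition $a,b\ge 2$ enters primarily in comparing the induced module with the highest-weight representation (it is what makes the relevant highest-weight vector argument work under the ordinary projector), not only in killing boundary terms; and note that, by Harder's theorem quoted later in the paper, $S_{a,b}=0$ unless $a=b$, so for non-parallel arithmetical $\kappa$ the control theorem is the nontrivial assertion that $\mathbb{H}\otimes_{\Lambda,\kappa}E=0$ --- this is exactly what forces $\operatorname{Supp}\mathbb{H}$ to be $1$-dimensional, and any correct proof of the control theorem must account for it.
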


The sheaf $\mathbb{H}$ on $\mathcal{W}$ corresponds to the pushforward of $\cO(\mathcal{E}(U))$ to $\mathcal{W}$ in the $\GL_1$ theory. Hida has given a characterisation of its geometry analogous to proposition \ref{dimension} above:

\begin{theorem}[{\cite[Theorem 6.2]{hida94}}]
 The support of $\mathbb{H}$ is an equidimensional subscheme of $\mathcal{W}$ of dimension 1.
\end{theorem}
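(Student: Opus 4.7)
The plan is to bound $\dim \operatorname{Supp}(\mathbb{H})$ from below and above separately, and then upgrade the result to equidimensionality. Since $K$ is imaginary quadratic we have $r_2 = 1$ and $\delta = 0$, so $\mathcal{W}$ has dimension $2$, and the aim is to exhibit $\operatorname{Supp}(\mathbb{H})$ as a codimension-one closed subscheme.

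For the lower bound, I would exhibit a $1$-parameter family of classical ordinary arithmetical points lying in $\operatorname{Supp}(\mathbb{H})$. CM forms furnish the cleanest source: for each algebraic Gr\"ossencharacter $\psi$ of $K$ of infinity-type $(k-1, 0)$ with $k \ge 2$, automorphic induction produces a classical cuspidal automorphic form for $\GL_2/K$ of weight $(k, 2)$, and since $p$ splits in $K$ one can always choose an ordinary $\mathfrak{p}$-stabilisation, so by the preceding theorem the corresponding weight lies in $\operatorname{Supp}(\mathbb{H})$. Letting $k$ vary produces infinitely many arithmetical weights lying on a $1$-dimensional formal subscheme of $\mathcal{W}$, whence $\dim \operatorname{Supp}(\mathbb{H}) \ge 1$.

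For the upper bound, the essential input is Harder's vanishing of the Euler characteristic of the local system $\mathcal{V}_{a,b}$ on the arithmetic $3$-manifold $Y_1(\mathfrak{N}p^c)$. This forces $\dim_E H^1_{\mathrm{par}}$ and $\dim_E H^2_{\mathrm{par}}$ to coincide and to grow only linearly in $a + b$, rather than the quadratic growth that would be compatible with flatness of $\mathbb{H}$ over the $2$-dimensional $\mathcal{W}$. Translating this via the control theorem stated above, the rank of $\mathbb{H}/\mathfrak{m}^n \mathbb{H}$ over $\ZZ_p$ grows polynomially of degree at most $1$ in $n$ (where $\mathfrak{m}$ is the maximal ideal of the Iwasawa algebra), and hence every minimal prime of $\operatorname{Supp}(\mathbb{H})$ has codimension at least $1$ in $\mathcal{W}$.

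Finally, equidimensionality requires ruling out isolated $0$-dimensional components. This should follow from the balanced nature of the ordinary Hida module: the vanishing Euler characteristic, together with $H^0_{\mathrm{par}} = H^3_{\mathrm{par}} = 0$, yields a two-term relation between $H^1_{\mathrm{ord}}$ and $H^2_{\mathrm{ord}}$ from which one reads off that $\mathbb{H}$ is Cohen--Macaulay of depth $1$ over the regular $2$-dimensional Iwasawa algebra, and so has no embedded or isolated $0$-dimensional primes. I expect the main obstacle to be the upper bound: extracting the linear (rather than quadratic) growth of ordinary parabolic cohomology, and verifying that Hida's ordinary projector behaves well simultaneously in degrees $1$ and $2$, is the technical heart of \cite{hida94}.
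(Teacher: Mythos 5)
The paper gives no argument here---the theorem is quoted verbatim from Hida---so your sketch must stand on its own, and its first step fails. The one-parameter family of classical points you propose for the lower bound does not exist: by Harder's theorem, quoted two displays later in this very section, $S_{a,b}(\Gamma_1(\mathfrak{N}p^c))=0$ unless $a=b$, so there are no cuspidal cohomological forms for $\GL_2/K$ of weight $(k,2)$ with $k>2$. The underlying confusion is about where automorphic induction lands: inducing an algebraic Gr\"ossencharacter of the imaginary quadratic field $K$ produces a CM form on $\GL_2/\QQ$, not on $\GL_2/K$, and its base change back up to $K$ is the isobaric sum $\psi\boxplus\psi^\sigma$, which is not cuspidal. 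Indeed, the impossibility of proving $\dim\operatorname{Supp}\mathbb{H}\ge 1$ by exhibiting families of classical points is precisely the theme of this section: the Calegari--Mazur result quoted immediately afterwards produces components of $\operatorname{Supp}\mathbb{H}$ meeting $\mathcal{W}_0$ without containing it, on which classical points are (for all anyone knows) very scarce. The lower bound in Hida's theorem is instead a commutative-algebra statement: $\mathbb{H}$ arises from the cohomology of a perfect complex over the Iwasawa algebra concentrated in two consecutive degrees (degrees $1$ and $2$ of the arithmetic $3$-fold), and every minimal prime of the support of the cohomology of such a complex has codimension at most $1$. Your final paragraph gestures at exactly this mechanism, but the Cohen--Macaulay assertion is both unproved and stronger than what is needed; the codimension bound on minimal primes suffices and does not require computing a depth.

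Your upper bound is correct in substance but can be said more directly: by the control theorem, $\mathbb{H}\otimes_{\kappa}E\cong S_{a,b}(\Gamma_1(\mathfrak{N}p^c),\varepsilon)_{\mathrm{ord}}=0$ for every arithmetical $\kappa$ with $a\ne b$, these $\kappa$ are dense in the formal Zariski topology of $\mathcal{W}$, and hence $\operatorname{Supp}\mathbb{H}$ is a proper closed subscheme of the $2$-dimensional $\mathcal{W}$, so every component has dimension at most $1$. The growth-rate comparison is a detour, and the claim that the relevant cohomology grows linearly in $a+b$ is not something you should lean on (for imaginary quadratic fields the behaviour of $\dim S_{a,a}$ as $a$ varies is itself delicate). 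Combining codimension $\ge 1$ from torsionness with the perfect-complex bound of codimension $\le 1$ on each minimal prime yields equidimensionality of dimension $1$ at once.
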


We also have an obstruction to the existence of locally algebraic points arising from archimedean considerations, analogous to Theorem \ref{weilthm}:

\begin{theorem}[{\cite[3.6.1]{harder87}}]
 The space $S_{a, b}(\Gamma_1(\mathfrak{N}p^c))$ is zero unless $a = b$.
\end{theorem}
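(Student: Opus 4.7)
The plan is to reduce the statement to an archimedean vanishing result, since $K$ has a unique infinite place and it is complex. By the Borel--Garland--Matsushima decomposition, the cuspidal cohomology
\[ H^1_\mathrm{par}(Y_1(\mathfrak{N}p^c), \mathcal{V}_{a,b}) = \bigoplus_\pi m(\pi) \, \pi_f^{K_f} \otimes H^*(\mathfrak{g}, K_\infty; \pi_\infty \otimes V_{a,b}) \]
breaks up over cuspidal automorphic representations $\pi$ of $\mathrm{GL}_2(\mathbb{A}_K)$, where $V_{a,b}$ is the algebraic representation of $\mathrm{Res}_{K/\mathbb{Q}}\mathrm{GL}_2$ of highest weight $(a,b)$. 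So it is enough to show that if $\pi_\infty$ is an irreducible unitary representation of $\mathrm{GL}_2(\mathbb{C})$ with $H^*(\mathfrak{g}, K_\infty; \pi_\infty \otimes V_{a,b}) \neq 0$, then $a = b$.

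The next step is to classify such $\pi_\infty$. Viewing $\mathrm{GL}_2(\mathbb{C})$ as a real Lie group, its complexified Lie algebra is $\mathfrak{gl}_2(\mathbb{C}) \oplus \mathfrak{gl}_2(\mathbb{C})$, where the two factors are interchanged by the real structure (complex conjugation). The representation $V_{a,b}$ of this real group is $\mathrm{Sym}^{a-2} \boxtimes \mathrm{Sym}^{b-2}$ (after the usual shifts), so its infinitesimal character has one ``holomorphic'' half determined by $a$ and one ``antiholomorphic'' half determined by $b$. Matching infinitesimal characters (a necessary condition for nonvanishing of $(\mathfrak{g}, K_\infty)$-cohomology with nontrivial coefficients) forces $\pi_\infty$ to be a principal series $\pi(\mu_1, \mu_2)$ whose inducing characters $\mu_i : \mathbb{C}^\times \to \mathbb{C}^\times$ satisfy $\mu_i(z) = z^{p_i}\bar{z}^{q_i}|z|^{s_i}$ with $(p_1 - p_2, q_1 - q_2)$ determined by $(a, b)$.

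I would then invoke the unitarity constraint. Cuspidal automorphic representations are unitary, and the unitary dual of $\mathrm{GL}_2(\mathbb{C})$ consists of unitary principal series (where $s_i \in i\mathbb{R}$ and $p_i, q_i$ satisfy a reality condition) together with complementary series and one-dimensional representations, the latter two being ruled out by regularity of the infinitesimal character when $a, b \geq 2$. Unitarity of a principal series character $z \mapsto z^p \bar z^q |z|^s$ requires that $p - q \in \mathbb{Z}$ is fixed by the real structure in the appropriate sense; concretely, the real structure swaps the two $\mathfrak{gl}_2$-factors and so exchanges the exponents $p$ and $q$. This conjugation symmetry forces the two halves of the infinitesimal character to be complex-conjugate, and combining this with the matching condition to $V_{a,b}$ yields $a = b$.

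The main obstacle is keeping track of conventions: the weight $(a,b)$ on $\mathrm{Res}_{K/\mathbb{Q}}\mathrm{GL}_2$, the characters $\mu_i$ of $\mathbb{C}^\times$, and the Harish-Chandra parameter of $\pi_\infty$ are easy to confuse, and an off-by-one in the half-sum of positive roots could obscure the argument. A cleaner packaging, which I would probably adopt, is to use the Langlands parameter: the $L$-parameter $\phi : W_\mathbb{C} \to \mathrm{GL}_2(\mathbb{C})$ attached to $\pi_\infty$ must be algebraic with Hodge--Tate-type numbers $(a, b)$ relative to the two embeddings $K \into \mathbb{C}$, and also, because $\pi_\infty$ is unitary and $W_\mathbb{C} = \mathbb{C}^\times$ carries complex conjugation, $\phi$ must be invariant up to conjugation under this symmetry, which immediately interchanges the two Hodge--Tate numbers and forces $a = b$.
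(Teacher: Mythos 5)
The paper does not prove this statement: it is quoted verbatim from Harder \cite[3.6.1]{harder87}, so there is no internal argument to compare yours against. That said, your sketch is essentially the standard proof of the cited result (it is the $\GL_2$, imaginary-quadratic case of Clozel's purity lemma, and close to what Harder actually does): decompose cuspidal cohomology via $(\mathfrak{g},K_\infty)$-cohomology, use Wigner's lemma to match the infinitesimal character of $\pi_\infty$ with that of $V_{a,b}^\vee$, note that the only unitary representations of $\GL_2(\CC)$ with regular integral infinitesimal character are the unitarily induced principal series, and observe that unitarity of the inducing characters $z \mapsto z^{p_j}\bar z^{q_j}$ forces $p_j + q_j \in i\RR$ and $p_j - q_j \in \ZZ$, whence $p_1 - p_2 = -(q_1 - q_2)$ and so $a - 1 = |p_1 - p_2| = |q_1 - q_2| = b - 1$. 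The one step I would flag is your closing ``cleaner packaging'': the claim that $\phi|_{W_\CC}$ ``must be invariant up to conjugation under complex conjugation'' is not a constraint imposed by the Weil group itself ($W_\CC = \CC^\times$ carries no Galois descent datum here, and an arbitrary algebraic character such as $z \mapsto z^2$ is a perfectly good parameter); the symmetry exchanging the two exponents is forced purely by unitarity of $\pi_\infty$, exactly as in your third paragraph, so the $L$-parameter reformulation is a restatement rather than an independent shortcut. A second, minor, point: $H^1_{\mathrm{par}}$ is interior cohomology and could a priori contain non-cuspidal classes, but since the paper defines $S_{a,b}$ as the cuspidal cohomological forms (and for regular weight the residual spectrum of $\GL_2$ contributes nothing cohomological in degree $1$), your reduction to cuspidal $\pi$ is harmless.
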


Hence any arithmetical weight lying in $\operatorname{Supp} \mathbb{H}$ is locally parallel, and thus contained in a translate by some locally constant character of the 1-dimensional subscheme $\mathcal{W}_0$ parametrising parallel weights.

\begin{theorem}[{\cite[Lemma 8.8]{calegari-mazur}}]
There exist pairs $(K, \mathfrak{N}, \mathfrak{p})$ such that $\operatorname{Supp} \mathbb{H}$ has nonempty intersection with, but does not contain, the component of $\mathcal{W}_0$ containing the character $x \mapsto (\operatorname{N}_{K/\QQ} x)^2$.
\end{theorem}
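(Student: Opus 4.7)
The proof proceeds by exhibiting a concrete witness $(K, \mathfrak{N}, \mathfrak{p})$ and verifying the two halves of the statement separately. The plan is to choose $K$ imaginary quadratic, $p$ a rational prime splitting as $\mathfrak{p}\overline{\mathfrak{p}}$, and $\mathfrak{N}$ coprime to $p$ so that there exists a cuspidal Bianchi eigenform $f$ of weight $(2,2)$ and level $\Gamma_1(\mathfrak{N})$ which is ordinary at both $\mathfrak{p}$ and $\overline{\mathfrak{p}}$ and is neither CM nor a base change from $\QQ$. Such $f$ are plentiful for small $K$ and $\mathfrak{N}$ and can be located in Cremona's tables of Bianchi newforms. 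By Hida's theorem, $f$ supplies a point $\chi_f \in \operatorname{Supp}\mathbb{H}$ mapping to $\kappa_2 : x \mapsto (\operatorname{N}_{K/\QQ} x)^2 \in \mathcal{W}_0$, establishing nonempty intersection.

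For the non-containment half I would argue by contradiction. If the component $\mathcal{W}_0^\circ$ of $\mathcal{W}_0$ through $\kappa_2$ lay inside $\operatorname{Supp}\mathbb{H}$, then since $\mathbb{H}$ is finitely generated over the Iwasawa algebra it would have positive generic rank along $\mathcal{W}_0^\circ$ near $\chi_f$. By Hida's theorem this would produce a $1$-parameter family of ordinary classical Bianchi eigenforms $\{f_a\}_{a \ge 2}$ of parallel weights $(a,a)$ and bounded level, specialising to $f$ at $a=2$. On the Galois side this is a nonzero first-order ordinary parallel-weight deformation of the representation $\rho_f : G_K \to \GL_2(\overline{\QQ}_p)$ attached to $f$, i.e.\ a nonzero class in an adjoint Selmer group $H^1_\mathcal{S}(G_K, \operatorname{ad}^0 \rho_f)$ with local conditions coding ordinarity with parallel Hodge-Tate weights at $\mathfrak{p}, \overline{\mathfrak{p}}$ and unramification (or finiteness) outside $\mathfrak{N}p$.

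The remaining task is to exhibit an example in which this Selmer group vanishes. Here the imaginary quadratic setting works in our favour: the Greenberg--Wiles Euler characteristic formula for $\operatorname{ad}^0 \rho_f$ predicts expected dimension $0$ for $H^1_\mathcal{S}$, the negative contribution coming from the single complex archimedean place of $K$ (by contrast, the totally real case predicts a positive dimension, which accounts for the existence of classical Hida families over $\QQ$). Genericity then forces this expected vanishing to hold in many specific cases, and one either verifies it by a direct numerical computation of the relevant Galois cohomology, or adjoins auxiliary level primes to $\mathfrak{N}$ whose local Selmer conditions kill any residual global classes.

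The main obstacle is the explicit numerical balancing in the construction: one must simultaneously secure the existence of a non-CM, non-base-change ordinary weight-$(2,2)$ Bianchi form at level $\mathfrak{N}$ (to get the intersection point) while keeping the associated adjoint Selmer group trivial (to block a Hida family along the parallel direction). Enlarging $\mathfrak{N}$ tends to create both more classical cuspforms and more Selmer classes, so these two requirements pull in opposite directions, and executing the balance for a concrete example — the essential arithmetic content of Calegari--Mazur's Lemma~8.8 — is unavoidable.
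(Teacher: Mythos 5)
The paper does not actually prove this statement --- it is imported wholesale from \cite[Lemma 8.8]{calegari-mazur} --- so your proposal has to be measured against the proof given there. Your first half is fine: an ordinary cuspidal Bianchi form of weight $(2,2)$ at a suitable level produces a point of $\operatorname{Supp} \mathbb{H}$ lying over $x \mapsto (\operatorname{N}_{K/\QQ} x)^2$. The second half has a genuine logical gap. Non-containment is a statement about the \emph{whole} support of $\mathbb{H}$: since $\operatorname{Supp}\mathbb{H}$ is equidimensional of dimension $1$ (Hida's theorem, quoted just above in the paper) and the relevant component $\mathcal{W}_0^\circ$ of $\mathcal{W}_0$ is an irreducible curve, containment would mean precisely that $\mathcal{W}_0^\circ$ occurs as one of the irreducible components of $\operatorname{Supp}\mathbb{H}$ --- but not necessarily as the component passing through your chosen point $\chi_f$. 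Killing the nearly ordinary adjoint Selmer group of $\rho_f$ only obstructs a parallel-weight family \emph{through $f$}; it says nothing about CM components, nor about base-change components: the base change to $K$ of an ordinary Hida family over $\QQ$ really does sweep out the entire parallel line, so such components must be excluded by the choice of $(K,\mathfrak{N},p)$, not by deformation theory of a single non-base-change form. So even granting the Selmer vanishing, your argument does not rule out containment.

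Moreover, ``genericity forces the expected vanishing'' is a heuristic, not a proof, and there is no practical method for numerically computing the relevant Selmer group of the Galois representation attached to a Bianchi form. The cited proof sidesteps all of this: by Hida's control theorem, an arithmetical weight $\kappa_k : x \mapsto (\operatorname{N}_{K/\QQ} x)^k$ lies in $\operatorname{Supp}\mathbb{H}$ if and only if the corresponding space of ordinary cusp forms is nonzero, so it suffices to exhibit a single higher $k$ on the same component of $\mathcal{W}_0$ for which the cuspidal cohomology in weight $(k,k)$ (a fortiori its ordinary part) vanishes at the given level. That is a finite, implementable computation with Bianchi groups, and it is the actual arithmetic content of Calegari--Mazur's lemma. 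Your observation that the Euler-characteristic count (the $l_0 = 1$ phenomenon for $\GL_2$ over an imaginary quadratic field) is what makes the statement plausible is correct, but it belongs to the motivation, not to the proof.
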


We deduce that in such cases, $\operatorname{Supp} \mathbb{H}$ has irreducible components $X$ of dimension 1 such that for any $N < \infty$, the set of arithmetical weights $\kappa \in X$ with $c(\kappa) < N$ is finite; in particular, there are finitely many arithmetical weights in $X(E)$ for any given field $E$. Furthermore, the set of all arithmetical weights is not dense in the rigid Zariski topology of $X$. However, since the set of locally parallel weights is not formally Zariski-closed in $\mathcal{W}$, one cannot rule out the possibility that the set of all arithmetical locally algebraic weights in $X$ is infinite (and hence dense in $X$ for the formal Zariski topology).

\begin{remark}
 It is asserted in \cite[Theorem 8.9]{calegari-mazur} that there are components $X$ which contain only finitely many arithmetical weights, but the proof given therein relies on the assertion that the intersection of $X$ with the set of locally parallel weights is formally Zariski-dense in $X$ (and hence must be either finite or all of $X$); this is false as the set of all locally parallel weights is not formally Zariski-closed in $\mathcal{W}$, and the rigid space $\mathcal{W}^\rig$ is not quasicompact. Similarly, the arguments of Theorem 7.1 of \textit{op.cit.}~do not show that the Galois-theoretic deformation space constructed therein has finitely many specialisations which are Hodge-Tate with parallel Hodge-Tate weights, but rather the weaker statement that for any $N$ it has finitely many specialisations $V$ which have parallel Hodge-Tate weights and for which the Weil-Deligne representation $D_\mathrm{pst}(V)$ has Artin conductor $< N$ (in particular, it has finitely many crystalline specialisations of parallel weight).
\end{remark}

\begin{remark}
 The essential difference between the $\GL_1$ and $\GL_2$ cases is that in the latter we lack an explicit description of the subscheme $\operatorname{Supp} \mathbb{H}$. Thus in the former case we can show that every component of $\mathcal{E}(U)$ contains infinitely many points corresponding to classical automorphic forms, while in the latter case we cannot rule out the existence of irreducible components of $\operatorname{Supp} \mathbb{H}$ containing only finitely many such points -- we merely assert that the existence of such components has not been proven.
\end{remark}

\section{Acknowledgements}

The author would like to thank Kevin Buzzard for much useful advice, and specifically for alerting him to the results of \cite{weil56}; and Frank Calegari for helpful discussions regarding \cite{calegari-mazur}.

\renewcommand{\MR}[1]{%
  \href{http://www.ams.org/mathscinet-getitem?mr=#1}{MR #1}
}

\end{document}